\newcommand{\CP}{\mathds{C}\mathrm{P}}
\newcommand{\CH}{\mathds{C}\mathrm{H}}
\newcommand{\C}{\mathds{C}}
\newcommand{\K}{K\"{a}hler}
\newtheorem{theor}{Theorem}
\newtheorem*{criterion}{Calabi's criterion}
\newtheorem{lem}[theor]{Lemma}
\newtheorem{ex}{Example}
\newtheorem{remark}[theor]{Remark}
\begin{document}

\title[On Calabi's diastasis function of the cigar metric]{On Calabi's diastasis function of the cigar metric}

\author{Andrea Loi}
\address{(Andrea Loi) Dipartimento di Matematica e Informatica \\
         Universit\`a di Cagliari (Italy)}
         \email{loi@unica.it}

\author{Michela Zedda}
\address{(Michela Zedda) Dipartimento di Matematica e Fisica ``Ennio De Giorgi'' \\
          Universit\`a del Salento (Italy)}
\email{michela.zedda@gmail.com}

\thanks{
The  authors were supported  by INdAM--GNSAGA - Gruppo Nazionale per le Strutture Algebriche, Geometriche e le loro Applicazioni.}
\subjclass[2010]{53C55; 53D45} 
\keywords{Calabi's diastasis function; \K\ immersions; cigar metric.}

\begin{abstract}
We show that the  Cigar metric on $\C$ is an example of real analytic  K\"ahler manifold with globally defined and positive Calabi's diastasis function which  cannot be K\"ahler immersed into any 
(finite or infinite dimensional) complex space form.
\end{abstract}
 
\maketitle


\section{Introduction}
In his seminal paper \cite{calabi}, E. Calabi provides a criterion for a K\"ahler manifold $(M,g)$ to admit a holomorphic and isometric (from now on K\"ahler) immersion into a complex space form, finite or infinite dimensional. Recall that a complex space form, that we assume to be complete and simply connected, up to homotheties can be of three types, according to the sign of the constant holomorphic sectional curvature:
\begin{enumerate}
\item[($i$)] the complex Euclidean space $\C^N$ of complex dimension $N\leq \infty$, endowed with the flat metric.  Here $\C^\infty$ denotes the Hilbert space $\ell ^2({\C})$ consisting of sequences $w_j$, $j=1,2,\dots$, $w_j\in\C$ such that $\sum_{j=1}^{+\infty}|w_j|^2<+\infty$. 
\item[($ii$)] the complex projective space $\CP^N$ of complex dimension $N\leq \infty$, with the Fubini-Study metric $g_{FS}$.
\item[($iii$)] the complex hyperbolic space $\CH^N$ of complex dimension $N\leq \infty$, that is the unit ball $B\subset\C^N$ given by $B=\left\{(z_1,\dots,z_N)\in\C^N,\; \sum_{j=1}^N|z_j|^2<1\right\}$
 endowed with the hyperbolic metric.
\end{enumerate}


The criterion Calabi states is given in terms of the {\em diastasis function} associated to the metric $g$, defined as follows. By duplicating the variables $z$ and $\bar z$, a K\"ahler potential $\Phi$ around $p\in M$ can be complex analytically continued to a function $\tilde \Phi$ defined in a neighborhood $U$ of the diagonal containing $(p,\bar p)\in M\times M$.
The diastasis function is defined by the formula:
\begin{equation}\label{diast}
D^g(p,q)=\tilde\Phi(p,\bar p)+\tilde\Phi(q,\bar q)-\tilde\Phi(p,\bar q)-\tilde\Phi(q,\bar p),\quad (p,q)\in U.
\end{equation}
Once $p$ is fixed and $z$ are coordinates around it, $D^g(p,q)=D_p^g(z)$ is a K\"ahler potential in a neighborhood of $p$. The existence of a local K\"ahler immersion into a complex space form depends only on the derivatives with respect to $z$ and $\bar z$ of $D^g_p(z)$, evaluated at $p$. In particular, when $(M,g)$ is a complex curve endowed with a radial metric $g$ and the ambient space is the complex projective space, the criterion reads as follows:
\begin{criterion}
Let $(M,g)$ be a real analytic K\"ahler curve such that in a coordinate system $\{z\}$ centered at $p\in M$, $D_p^g(z)$ depends only on $|z|^2$. Then 
$(M,g)$ admits a local K\"ahler immersion into $\CP^{N\leq \infty}$ iff for any $n>0$:
$$
\frac{\partial^{2n}\exp\left(D_0(|z|^2)\right)}{\partial z^n\partial \bar z^n}|_0\geq 0.
$$
Further, the image of $M$ is not contained in any totally geodesic submanifold of the $N$-dimensional ambient space iff the number of derivatives in the above inequality different from zero are exactly $N$.
\end{criterion}
A local K\"ahler immersion can be extended to a global one iff for each point $p\in M$, the maximal analytical extension of $D_p^g(z)$ is single valued, condition which is fulfilled when $M$ is simply connected.

  In this paper, we are interested in studying how the multiplication of the K\"ahler metric $g$ by a positive constant $c$ affects the existence of a (local) K\"ahler immersion of $(M,cg)$ into complex space forms.
  
 Observe that when one studies K\"ahler immersions into the complex Euclidean space, the multiplication of the metric $g$ by $c$ is harmless. In fact, if $f:M\rightarrow \C^N$, $N\leq\infty$, satisfies $f^*(g_0)=g$ then $(\sqrt{c}f)^*(g_0)=cg$. 
  
A very interesting example in this sense is given by Cartan domains $\Omega$, i.e. irreducible bounded homogeneous domains of $\mathds{C}^d$ endowed with their Bergman metric $g_B$. The existence of a K\"ahler immersion of $(\Omega, c\,g_B)$ into $\CP^\infty$  depends firmly on $c$ and it is strictly related to the Wallach set $W(\Omega)$ of $\Omega$ (see \cite{articwall}). More precisely, if $\gamma$ is the genus of $\Omega$, then $(\Omega,cg_B)$, $c>0$ admits a K\"ahler immersion into $\CP^\infty$ if and only if $c\gamma$ belongs to $W(\Omega)\setminus\{0\}$. Recall that $W(\Omega)$ is a subset of the real line depending on two of the domain's invariants, denoted  by $a$ (strictly positive real number) and  $r$ (the rank of $\Omega$), and it is given by:
$$
W(\Omega)=\left\{0,\,\frac{a}{2},\,2\frac{a}{2},\,\dots,\,(r-1)\frac{a}{2}\right\}\cup \left((r-1)\frac{a}{2},\,\infty\right).
$$
(the author is referred to \cite{arazy}, \cite{faraut} and \cite{upmeier} for more details and results about the Wallach set).
This result has been generalized by the first author and R. Mossa \cite{loimossa12} for bounded homogeneous domains.

In view of the case of Hermitian symmetric spaces it is natural and interesting to exhibit examples of manifolds $(M,c g)$ that do not admit a K\"ahler immersion into any complex space form for any value of $c>0$. 
At the end of his paper  Calabi  himself provides the following two examples (in the first one $M$ is compact and in the second one $M$ is noncompact and complete).

\begin{ex}\rm
Consider the product $\CP^1\times\CP^1$ endowed with the metric $g=b_1 g_{FS}\oplus b_2 g_{FS}$, with $b_1$, $b_2$ positive real numbers such that $b_2/b_1$ is irrational. Then $(\CP^1\times\CP^1,cg)$ does not admit a K\"ahler immersion into $\CP^\infty$ for any value of $c$.
In fact, in \cite[Th.13]{calabi}, Calabi proves that $(\CP^n,cg_{FS})$ admits a K\"ahler immersion into $\CP^\infty$ iff $1/c$ is a positive integer, and this property cannot be fulfilled by both $1/cb_1$ and $1/cb_2$.
\end{ex} 

\begin{ex}\rm
Consider on $\mathds{C}$ the metric $g$ whose associate K\"ahler form $\omega$ is given by:
$\omega=\left(4\cos(z-\bar z)-1\right)dz\wedge d\bar z$. A K\"ahler immersion of $(\mathds{C},cg)$ into $\CP^\infty$ is not possible since the diastasis:
$$
D(p,q)=4\left[\cos(p-\bar p)+\cos(q-\bar q)-\cos(p-\bar q)-\cos(q-\bar p)\right]-|p-q|^2,
$$
takes negative values, e.g. for $q=p+2\pi$.
\end{ex}

These two examples suggest a refinement of the previous problem. In fact, both the metrics described present geometrical obstructions to the existence of a K\"ahler immersion into $\CP^\infty$ that put aside the role of $c$. More precisely, in the first example, the  \K\ form  $\omega$ associated to $g$ is not integral, while in the second one the diastasis associated to $g$ is negative at some points. Thus, it is interesting  to find examples of real analytic K\"ahler manifold $(M, cg)$ which cannot be  \K\ immersed into any (finite or infinite dimensional) complex space form for any $c>0$ and satisfy:
\begin{itemize}
\item[($i$)]
the \K\ form  $\omega$ associated to $g$ is integral;
\item[($ii$)]
the diastasis associated to $g$ is globally defined on $M\times M$ and positive.
\end{itemize}

Our main result in this direction  is the following theorem, proved at the end of next section:

\begin{theor}\label{mainnew}
Let $g=\frac{1}{1+|z|^2}dz\otimes d\bar z$ be the Cigar metric on $\C$. Then the diastasis function of the metric  $g$ is globally defined and positive  on $\C\times \C$ and  $(\C, c g)$ cannot be (locally) \K\ immersed   into any complex space form for any $c>0$.
\end{theor}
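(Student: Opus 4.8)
The plan is to write down the Kähler potential explicitly, read off the diastasis, prove positivity through an integral representation, and finally rule out immersions by reducing everything to the projective case and exploiting the boundary singularity of the potential.

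First I would compute a global K\"ahler potential. Writing $t=|z|^2$ and seeking a radial potential $\Phi(t)$ with $\partial^2\Phi/\partial z\partial\bar z=1/(1+t)$ gives $(t\Phi'(t))'=1/(1+t)$, whence $\Phi'(t)=\log(1+t)/t$ and $\Phi(t)=-\mathrm{Li}_2(-t)$, the dilogarithm. This is real analytic and globally defined on $\C$ with $\Phi(0)=0$; since $\C$ is simply connected, the associated diastasis is single valued on $\C\times\C$, and duplicating variables in \eqref{diast} yields
\[
D^g(p,q)=-\mathrm{Li}_2(-|p|^2)-\mathrm{Li}_2(-|q|^2)+\mathrm{Li}_2(-p\bar q)+\mathrm{Li}_2(-\bar p q).
\]
For positivity I would insert the representation $-\mathrm{Li}_2(-s)=\int_0^1 u^{-1}\log(1+su)\,\di u$ for each argument (valid for the complex cross terms by analytic continuation, the only, integrable, logarithmic singularities occurring where $1+p\bar q u$ vanishes), obtaining
\[
D^g(p,q)=\int_0^1 \frac1u\,\log\frac{(1+|p|^2u)(1+|q|^2u)}{|1+p\bar q u|^2}\,\di u .
\]
The elementary identity $(1+|p|^2u)(1+|q|^2u)-|1+p\bar q u|^2=|p-q|^2\,u\ge 0$ shows the integrand is nonnegative, so $D^g\ge 0$, with equality exactly when $p=q$.

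For the immersion statement I would first reduce to the projective case. Since the flat space $\C^N$ and the hyperbolic space $\CH^N$ (with \emph{any} positive multiple of its metric) both admit K\"ahler immersions into $\CP^\infty$ --- via the monomial maps realizing the potentials $e^{|z|^2}$ and $(1-\|w\|^2)^{-\lambda}$, whose expansions have only nonnegative coefficients --- it suffices to prove that $(\C,cg)$ is not projectively induced, even locally, for any $c>0$. By rotation invariance the diastasis centered at $0$ is radial, $D_0(t)=-\mathrm{Li}_2(-t)$, and Calabi's criterion says that a local immersion into $\CP^\infty$ exists iff the Taylor coefficients $e_m$ of $E(t):=\exp\!\big(cD_0(t)\big)=\exp\!\big(-c\,\mathrm{Li}_2(-t)\big)$, expanded in $t=|z|^2$, are all nonnegative.

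The main obstacle is that no single coefficient works uniformly in $c$: for small $c$ the coefficient of $t^2$ is already negative, but for large $c$ the low-order coefficients are all positive, and negativity is pushed to ever higher order. I would therefore argue globally, using the boundary behaviour of the potential. The series $E$ has radius of convergence $1$, its only singularity on the unit circle being at $t=-1$, the branch point of $\mathrm{Li}_2$; crucially $D_0(1)=\pi^2/12$ and $D_0(-1)=-\pi^2/6$ are finite, whereas $D_0'(t)=\log(1+t)/t\to+\infty$ as $t\to -1^+$. Suppose all $e_m\ge 0$. From $E'(1^-)=c\log2\cdot e^{c\pi^2/12}<\infty$ and nonnegativity (monotone convergence) one gets $\sum_m m\,e_m<\infty$, so $E'(t)=\sum_m m\,e_m t^{m-1}$ converges uniformly on $[-1,1]$ and extends continuously to $t=-1$ with \emph{finite} value. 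But $E'(t)=c\,\frac{\log(1+t)}{t}\,E(t)\to+\infty$ as $t\to-1^+$, because $E(-1)=e^{-c\pi^2/6}>0$. This contradiction forces some $e_m<0$ for every $c>0$, so $(\C,cg)$ admits no local K\"ahler immersion into $\CP^\infty$, and by the reduction none into any complex space form. The heart of the argument is exactly this use of the finite-value but infinite-derivative behaviour of the potential at the boundary point $t=-1$ to defeat all rescalings $c$ simultaneously.
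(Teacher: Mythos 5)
Your proof is correct, and on both halves of the statement it takes a genuinely different route from the paper's. For positivity, the paper also passes through $2\mathrm{Re}\,\mathrm{Li}_2(-\rho_1\rho_2e^{i\alpha})$ and Kummer's integral formula (your representation $-\mathrm{Li}_2(-s)=\int_0^1u^{-1}\log(1+su)\,du$ is the same integral after the substitution $y=\rho_1\rho_2u$), but it then estimates the terms separately through a case analysis on $\rho_1\rho_2$ ($\le 1$, between $1$ and $2$, and $>2$, the last case via a somewhat delicate monotonicity-and-limit argument); you instead merge everything into the single integrand $u^{-1}\log\bigl[(1+|p|^2u)(1+|q|^2u)/|1+p\bar q u|^2\bigr]$ and observe the identity $(1+|p|^2u)(1+|q|^2u)-|1+p\bar q u|^2=u|p-q|^2$, which settles nonnegativity in one line, yields strict positivity off the diagonal (sharper than the statement of Lemma \ref{pos}), and exhibits global definedness at the same time. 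For the non-immersibility, your reduction to $\CP^\infty$ is the same Bochner-type argument as the paper's; the divergence is in the core step. The paper argues combinatorially: it expresses the Taylor coefficients of $\exp(cD_0)$ through complete Bell polynomials, proves the asymptotic Lemma \ref{bjklimit} by induction, and concludes that the $2n$-th coefficient in $x=|z|^2$ is eventually negative because $\sum_{k\ge1}(-1)^{k+1}c^k(\pi^2/6)^k/k!=1-e^{-c\pi^2/6}<1$ --- a step that moreover relies on a tacit interchange of limit and summation in \eqref{eqtobeproven}, since the number of terms there grows with $n$. Your argument is soft: if all coefficients $e_m$ of $E=\exp(cD_0)$ were nonnegative, finiteness of $E'(1^-)$ would force $\sum_m me_m<\infty$ by monotone convergence, hence $E'$ would extend continuously to $t=-1$, contradicting $E'(t)=c\,t^{-1}\log(1+t)\,E(t)\to+\infty$ as $t\to-1^+$ (because $E(t)\to e^{-c\pi^2/6}>0$ there). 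Both proofs ultimately hinge on the same analytic phenomenon --- $\mathrm{Li}_2$ stays finite at its branch point while its derivative blows up --- but yours bypasses the Bell-polynomial machinery and the limit interchange entirely and defeats all rescalings $c$ at once; what the paper's longer computation buys in exchange is quantitative information, namely the asymptotic $e_{2n}\sim-c\,e^{-c\pi^2/6}/(2n)^2$, identifying which coefficients become negative and at what rate.
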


\begin{remark}\rm
It is worth pointing out that the cigar metric has positive sectional curvature. Hence, in view of Theorem \ref{mainnew},  it is  interesting to see if there exist examples  
of  negatively curved real analytic \K\ manifolds $(M ,g)$  with  globally  defined diastasis function which is positive and  such that 
$(M, cg)$ cannot be locally \K\ immersed into any complex space form for all $c>0$. 
\end{remark}

%
%
%

\section{The Cigar metric on $\mathds{C}$ and the proof of Theorem \ref{mainnew}}
The Cigar metric $g$ on $\mathds{C}$ has been introduced by Hamilton in \cite{Hamilton} as first example of K\"ahler--Ricci soliton on non-compact manifolds. It is defined by:
$$
g=\frac{dz\otimes d\bar z}{1+|z|^2}.
$$
In \cite{cigar} the authors of the present paper study K\"ahler--Ricci solitons, with the Cigar metric as particular case, from the symplectic point of view. 
A (globally defined) K\"ahler potential for this metric is given by (see also \cite{Suzuki}):
$$
D_0(|z|^2)=\int_{0}^{|z|^2}\frac{\log(1+s)}{s}ds,
$$
whose power series expansion around the origin reads:
\begin{equation}\label{expdiast}
D_0(|z|^2)= \sum_{j=1}^\infty (-1)^{j+1}\frac{|z|^{2j}}{j^2}.
\end{equation}
By duplicating the variable in this last expression, by \eqref{diast} we get:
\begin{equation}\label{diastdef}
D^g(z,w)=\sum_{j=1}^\infty\frac{(-1)^{j+1}}{j^2}\left(|z|^{2j}+|w|^{2j}-(z\bar w)^{2j}-(w\bar z)^{2j}\right).
\end{equation}
In the following lemma we prove that $D^g(z,w)$ is everywhere nonnegative and globally defined on $\mathds{C}\times\C$. 
It is worth pointing out that the fact that $D^g(z,w)$ is globally defined was already observed  in \cite{Suzuki}.
\begin{lem}\label{pos}
The diastasis function \eqref{diastdef} of the Cigar metric is globally defined and nonnegative.
\end{lem}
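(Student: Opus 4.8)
The plan is to obtain a single integral representation of the diastasis whose integrand is manifestly nonnegative, and then read off both the global definition and the sign. First I would start from the globally defined potential $D_0(x)=\int_0^x\frac{\log(1+s)}{s}\,ds$ and record that its analytic continuation in the duplicated variable is $\tilde\Phi(z,\bar w)=\int_0^{z\bar w}\frac{\log(1+s)}{s}\,ds$, the integral being taken along the segment from $0$ to $z\bar w$. Substituting this into the defining formula \eqref{diast} gives
\[
D^g(z,w)=\int_0^{|z|^2}\!\frac{\log(1+s)}{s}\,ds+\int_0^{|w|^2}\!\frac{\log(1+s)}{s}\,ds-\int_0^{z\bar w}\!\frac{\log(1+s)}{s}\,ds-\int_0^{\bar z w}\!\frac{\log(1+s)}{s}\,ds.
\]

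The main step is the substitution $s=\tau x$ in each integral, which turns $\int_0^{x}\frac{\log(1+s)}{s}\,ds$ into $\int_0^1\frac{\log(1+\tau x)}{\tau}\,d\tau$, rescaling all four integrals to the common interval $[0,1]$ and letting me collect them under one integral sign:
\[
D^g(z,w)=\int_0^1\frac{1}{\tau}\,\log\frac{(1+\tau|z|^2)(1+\tau|w|^2)}{|1+\tau z\bar w|^2}\,d\tau,
\]
where I have used $\bar z w=\overline{z\bar w}$ so that the two cross terms combine into $-\log|1+\tau z\bar w|^2$. Nonnegativity then reduces to an elementary pointwise inequality for the integrand: expanding the numerator and denominator, one finds
\[
(1+\tau|z|^2)(1+\tau|w|^2)-|1+\tau z\bar w|^2=\tau\,|z-w|^2\ge 0,
\]
so the argument of the logarithm is $\ge 1$ for every $\tau\in(0,1)$; since $1/\tau>0$, the integrand is nonnegative and hence $D^g(z,w)\ge 0$.

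For the global statement, the same integral representation exhibits $D^g$ as a single-valued real-valued function on all of $\C\times\C$: the integrand is real, because the imaginary parts of the two complex logarithms cancel, and it has at worst an integrable logarithmic singularity, occurring only when $z\bar w$ is real and $\le -1$, i.e. exactly where the factor $|1+\tau z\bar w|^2$ vanishes for some $\tau\in(0,1)$. This is consistent with the observation in \cite{Suzuki} and with the general fact, recalled in the Introduction, that the diastasis of a simply connected manifold is globally defined.

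The point demanding the most care is the passage from the power series \eqref{diastdef}, which only represents $D^g$ for small $|z|,|w|$, to the integral formula valid on all of $\C\times\C$: I must check that the straight-line continuation defining $\tilde\Phi(z,\bar w)$ is legitimate and that the resulting improper integral converges at the logarithmic singularity described above (and agrees with \eqref{diastdef} on the common domain by real analyticity). Once this is settled, the inequality $(1+\tau|z|^2)(1+\tau|w|^2)\ge|1+\tau z\bar w|^2$ does all the remaining work, and no delicate estimate on the alternating series \eqref{diastdef} itself is needed.
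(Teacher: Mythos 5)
Your proof is correct, and it takes a genuinely different route from the paper's, even though the two share a common starting point: your rescaled integral for the cross terms, $-\int_0^1\frac{\log|1+\tau z\bar w|^2}{\tau}\,d\tau$, is exactly Kummer's formula for $2\,{\rm ReLi}_2(-z\bar w)$ that the paper invokes. The difference is what happens next. The paper decouples the cross term from the diagonal terms by minimizing over the angle $\alpha=\arg(z\bar w)$ (worst case $\alpha=\pi$); this discards the cancellation among the four terms and forces a three-case analysis in $\rho_1\rho_2$ (namely $\rho_1\rho_2\le 1$, $1<\rho_1\rho_2\le 2$, $\rho_1\rho_2>2$), the last case settled only by a monotonicity-and-limit argument. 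You instead keep all four terms coupled under a single integral over $[0,1]$, where the elementary identity
\[
(1+\tau|z|^2)(1+\tau|w|^2)-|1+\tau z\bar w|^2=\tau\,|z-w|^2
\]
makes the integrand pointwise nonnegative: no case distinctions, no asymptotics, and as a bonus you obtain strict positivity $D^g(z,w)>0$ for $z\neq w$ (the integrand is then strictly positive for a.e.\ $\tau$), which is the form of the statement Theorem \ref{mainnew} actually asserts, while the lemma itself only claims nonnegativity. As for the continuation issue you flag: you treat it at the same level of rigor as the paper, whose own formula likewise has an integrable logarithmic singularity precisely on the set $\{z\bar w\in(-\infty,-1]\}$ and which defers global definedness to \cite{Suzuki}; your representation agrees with \eqref{diastdef} near the origin (note the cross-term exponents there should read $(z\bar w)^j$ rather than $(z\bar w)^{2j}$, a typo in the paper, as its own ${\rm Li}_2$ rewriting confirms), and it is real analytic on the connected set $\{z\bar w\notin(-\infty,-1]\}$ containing the diagonal, so the identification with the diastasis follows by analytic continuation there and extends by continuity to the singular set.
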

\begin{proof}
If we denote by ${\rm Li}_n(z)$ the polylogarithm function, defined for $|z|<1$ by:
$$
{\rm Li}_n(z)=\sum_{j=1}^\infty\frac{z^j}{j^n},
$$
and by analytic continuation otherwise, from \eqref{diastdef} we can write $D^g(z,w)$ as:
$$
D(z,w)=-{\rm Li}_2(-|z|^2)-{\rm Li}_2(-|w|^2)+{\rm Li}_2(-z\bar w)+{\rm Li}_2(-w\bar z).
$$
Write $z=\rho_1e^{i\theta_1}$ and $w=\rho_2e^{\theta_2}$ and let $\alpha=\theta_1-\theta_2$. Then:
\begin{equation}
\begin{split}\label{diastcigarli}
D(z,w)=&-{\rm Li}_2(-\rho_1^2)-{\rm Li}_2(-\rho_2^2)+{\rm Li}_2(-\rho_1\rho_2\,e^{i\alpha})+{\rm Li}_2(-\rho_1\rho_2\,e^{-i\alpha})\\
=&-{\rm Li}_2(-\rho_1^2)-{\rm Li}_2(-\rho_2^2)+2{\rm ReLi}_2(-\rho_1\rho_2\,e^{i\alpha}),
\end{split}
\end{equation}
where we are allowed to take the real parts since $D^g(z,w)$ is real. 
In order to simplify the term ${\rm ReLi}_2(-\rho_1\rho_2\,e^{i\alpha})$, we recall the following formula due to Kummer (see \cite{kummer} or \cite[p.15]{lewin}):
\begin{equation}
\begin{split}
{\rm ReLi}_2(\rho e^{i\theta})=&\frac12\left({\rm Li}_2(\rho e^{i\theta})+\overline{{\rm Li}_2(\rho e^{i\theta})}\right)\\
=&-\frac12\left(\int_0^\rho\frac{\log(1-y e^{i\theta})}{y}dy+\int_0^\rho\frac{\log(1-y e^{-i\theta})}{y}dy\right)\\
=&-\frac12\int_0^{\rho} \frac{\log(1-2y\cos\theta+y^2)}{y}dy
\end{split}\nonumber
\end{equation}
i.e.:
$$
{\rm ReLi}_2(-\rho e^{i\alpha})=-\frac12\int_0^{\rho} \frac{\log(1+2y\cos(\alpha)+y^2)}{y}dy.
$$ 
Since $1+2y\cos(\alpha)+y^2$ is decreasing for $0<\alpha<\pi$ and increasing for $\pi<\alpha<2\pi$, $\alpha=\pi$ is a minimum. Thus:
\begin{equation}\label{reli}
{\rm ReLi}_2(-\rho e^{i\alpha})\geq-\int_0^{\rho} \frac{\log(|1-y|)}{y}dy,
\end{equation}
where:
$$
-\int_0^{\rho} \frac{\log(|1-y|)}{y}dy=\begin{cases}{\rm Li}_2(\rho)& \textrm{if}\ \rho\leq 1\\
\frac{\pi^2}{6}-{\rm Li}_2(1-\rho)-\ln(\rho-1)\ln(\rho)& \textrm{otherwise}.
\end{cases}
$$
Thus, when $\rho_1\rho_2\leq1$ from \eqref{diastcigarli} and \eqref{reli}, we get:
$$
D(z,w)\geq-{\rm Li}_2(-\rho_1^2)-{\rm Li}_2(-\rho_2^2)+2{\rm Li}_2(\rho_1\rho_2)\geq 0,
$$
where the last equality follows since all the factors in the sum are positive. When $\rho_1\rho_2>1$, from \eqref{diastcigarli} and \eqref{reli}, we get:
\begin{equation}\label{lastd}
D(z,w)\geq-{\rm Li}_2(-\rho_1^2)-{\rm Li}_2(-\rho_2^2)+\frac{\pi^2}{3}-2{\rm Li}_2(1-\rho_1\rho_2)-2\ln(\rho_1\rho_2-1)\ln(\rho_1\rho_2).
\end{equation}
The RHS is positive for $1<\rho_1\rho_2\leq2$ since it is sum of positive factors. When $\rho_1\rho_2>2$, since all the factors are monotonic, it is enough to consider the limit as $\rho_1$ goes to $+\infty$ of $-D(z,w)/{\rm Li}_2(-\rho_1^2)$. By \eqref{lastd} above we get:
$$
\lim_{\rho_1\rightarrow+\infty}\frac{D(z,w)}{-{\rm Li}_2(-\rho_1^2)}\geq\frac52,
$$
and we are done.
\end{proof}

In order to prove Theorem \ref{mainnew} we need the following definition and properties of Bell polynomials. The partial Bell polynomials $B_{n,k}(x):=B_{n,k}(x_1,\dots, x_{n-k+1})$ of degree $n$ and weight $k$ are defined by (see e.g. \cite[p. 133]{comtet}):
\begin{equation}\label{bjkdef}
B_{n,k}(x_1,\dots, x_{n-k+1})=\sum_{\pi(k)}\frac{n!}{s_1!\dots s_{n-k+1}!}\left(\frac{x_1}{1!}\right)^{s_1}\left(\frac{x_2}{2!}\right)^{s_2}\cdots \left(\frac{x_{n-k+1}}{(n-k+1)!}\right)^{s_{n-k+1}},
\end{equation}
where the sum is taken over the integers solutions of:
$$
\begin{cases}s_1+2s_2+\dots+ks_{n-k+1}=n\\ s_1+\dots+s_{n-k+1}=k.\end{cases}
$$
Bell polynomials satisfy the following equalities, which are fundamental for the proof of our result (the second one has been firstly pointed out in \cite{dc}):
\begin{equation}\label{propr}
B_{n,k}(trx_1,tr^2x_2,\dots,tr^{n-k+1}x_{n-k+1})=t^kr^nB_{n,k}(x_1,\dots, x_{n-k+1}).
\end{equation}
\begin{equation}\label{prop2}
\begin{split}
B_{n,k+1}(x)=\frac{1}{(k+1)!}\underbrace{\sum_{\alpha_1=k}^{n-1}\sum_{\alpha_2=k-1}^{\alpha_1-1}\cdots \sum_{\alpha_k}^{\alpha_{k-1}-1}}_{k}&\overbrace{{n\choose \alpha_1}{\alpha_1\choose \alpha_2}\cdots{\alpha_{k-1}\choose \alpha_k}}^{k} \cdot\\
&\cdot x_{n-\alpha_1}x_{\alpha_1-\alpha_2}\cdots x_{\alpha_{k-1}-\alpha_k}x_{\alpha_k}.
\end{split}
\end{equation}

The complete Bell polynomials are given by:
$$
Y_n(x_1,\dots, x_n)=\sum_{k=1}^nB_{n,k}(x),\quad Y_0:=0,
$$
and the role they play in our context is given by the following formula \cite[Eq. 3b, p.134]{comtet}:
\begin{equation}\label{exp}
\frac{d^n}{dx^n}\left(\exp\left(\sum_{j=1}^\infty a_j\frac{x^j}{j!}\right)\right)|_0=Y_n(a_1,\dots, a_n).
\end{equation}
Observe that from \eqref{propr} it follows:
\begin{equation}\label{proprY}
Y_n(rx_1,r^2x_2,\dots,r^{n}x_{n})=r^nY_n(x_1,\dots, x_{n}).
\end{equation}
The following lemma has a key role in the proof of Theorem \ref{mainnew}. 
\begin{lem}\label{bjklimit}
Let $a_j=j!/j^2$. Then
$$
\lim_{n\rightarrow\infty}\frac{(2n)^2B_{2n,k+1}(a)}{(2n)!}=\frac{k+1}{(k+1)!}\sum_{j_1=1}^\infty\frac{1}{j_1^2}\cdots\sum_{j_{k-1}=1}^\infty\frac{1}{j_{k}^2}.
$$
\end{lem}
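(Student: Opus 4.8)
The plan is to first collapse the Bell polynomial into an explicit sum over compositions and then carry out an asymptotic analysis of that sum. Writing $N=2n$, I would insert $a_j=j!/j^2$ into the closed form \eqref{prop2} for $B_{N,k+1}$. The product of binomial coefficients appearing there telescopes to
$$\binom{N}{\alpha_1}\binom{\alpha_1}{\alpha_2}\cdots\binom{\alpha_{k-1}}{\alpha_k}=\frac{N!}{(N-\alpha_1)!\,(\alpha_1-\alpha_2)!\cdots(\alpha_{k-1}-\alpha_k)!\,\alpha_k!},$$
and, putting $m_0=N-\alpha_1$, $m_i=\alpha_i-\alpha_{i+1}$ for $1\le i\le k-1$, and $m_k=\alpha_k$, the nested summation ranges in \eqref{prop2} translate exactly into the requirement that $(m_0,\dots,m_k)$ runs over the compositions of $N$ into $k+1$ positive parts. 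The decisive simplification is that the factorials $m_i!$ coming from $a_{m_i}=m_i!/m_i^2$ cancel the denominator above, leaving
$$B_{N,k+1}(a)=\frac{1}{(k+1)!}\sum_{\substack{m_0+\dots+m_k=N\\ m_i\ge 1}}\frac{N!}{m_0^{2}\,m_1^{2}\cdots m_k^{2}}.$$

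Dividing by $N!$ and multiplying by $N^2$, the lemma becomes the assertion that $\Sigma_N:=N^2\sum_{m_0+\dots+m_k=N,\,m_i\ge1}\prod_{i=0}^{k}m_i^{-2}$ tends to $(k+1)\big(\sum_{m\ge1}m^{-2}\big)^k$. The heuristic I would make rigorous is that, since $\sum_m m^{-2}<\infty$, the sum is governed by compositions having a single part of size comparable to $N$. I would split the index set into $A=\{\exists\,i:\ m_i>N/2\}$ and its complement $B$. For $N>2$ at most one part can exceed $N/2$, so $A$ is the disjoint union of $A_i=\{m_i>N/2\}$, $i=0,\dots,k$. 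On $A_0$ I eliminate $m_0=N-\sum_{i\ge1}m_i$; the summand $\frac{N^2}{(N-\sum_{i\ge1}m_i)^2}\prod_{i\ge1}m_i^{-2}$ converges pointwise to $\prod_{i\ge1}m_i^{-2}$ and is bounded there by $4\prod_{i\ge1}m_i^{-2}$, so dominated convergence gives $N^2\sum_{A_0}\to\big(\sum_{m\ge1}m^{-2}\big)^k$. By symmetry of the summand in the $m_i$, each $A_i$ contributes the same limit, for a total of $(k+1)\big(\sum_{m\ge1}m^{-2}\big)^k$.

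To finish I would show $B$ is negligible. If every part is $\le N/2$, then at least two of them must be $\ge N/\big(2(k+1)\big)$, since otherwise the total would be at most $\frac{N}{2}+k\cdot\frac{N}{2(k+1)}<N$. Bounding those two reciprocal squares by $\big(2(k+1)/N\big)^2$ apiece and summing the remaining $k-1$ factors over all positive integers yields $\sum_{B}\prod m_i^{-2}=O(N^{-4})$, hence $N^2\sum_{B}\to0$. Collecting the two contributions and reinstating the factor $1/(k+1)!$ gives precisely $\frac{k+1}{(k+1)!}\big(\sum_{m\ge1}m^{-2}\big)^k$, that is, the product of $k$ copies of $\sum_{j\ge1}j^{-2}$ in the statement, and the evenness of $N=2n$ is never used. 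I expect the only delicate points to be the uniform dominating function justifying the interchange of limit and infinite sum on each $A_i$, and verifying that the two-large-parts estimate on $B$ is genuinely of lower order than $N^{-2}$.
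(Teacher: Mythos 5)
Your argument is correct in substance but takes a genuinely different route from the paper's after the common starting point. Both proofs begin the same way: specializing \eqref{prop2} to $a_j=j!/j^2$ so that the factorials cancel the telescoping binomial product, which is exactly the paper's formula \eqref{bkkkk}, rewritten in your notation as a sum over compositions $m_0+\dots+m_k=N$ of $\prod_i m_i^{-2}$. From there the paper proceeds by induction on $k$: the base case $k=1$ is computed by hand using the symmetry of $N^2/\bigl(m^2(N-m)^2\bigr)$, and the inductive step expands the outer sum according to the size of the first part, isolating main terms from remainders $\psi(n)$, $\Psi(n)$ and resumming through an auxiliary recursion $A_j(k)$. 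You instead argue directly and uniformly in $k$: you split the compositions according to whether some part exceeds $N/2$ (such a part is unique, so the sets $A_i$ are disjoint), apply dominated convergence on each $A_i$ to obtain $\bigl(\sum_{m\ge 1}m^{-2}\bigr)^k$ per piece, hence $(k+1)\bigl(\sum_{m\ge 1}m^{-2}\bigr)^k$ in total, and show the complementary region $B$ is negligible. This avoids the induction entirely, makes the asymptotic mechanism transparent (exactly one part of size $N-O(1)$ carries the limit), gives a fully rigorous interchange of limit and sum where the paper's control of its error terms is rather informal, and shows the evenness of $N=2n$ plays no role.

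One quantitative slip, which you yourself flag as the delicate point: the bound $\sum_B\prod_i m_i^{-2}=O(N^{-4})$ is too strong. After bounding the two large reciprocal squares by $\bigl(2(k+1)/N\bigr)^2$ apiece and fixing the other $k-1$ parts, the two large parts retain one residual degree of freedom (their sum is determined by the composition constraint, so one of them still ranges over up to $N$ values), which costs a factor of $N$. Equivalently, bound only one large reciprocal square by $\bigl(2(k+1)/N\bigr)^2$ and sum the other over its tail, which is $O(1/N)$, not $O(1/N^2)$. Either way the correct estimate is $\sum_B\prod_i m_i^{-2}=O(N^{-3})$, whence $N^2\sum_B=O(N^{-1})\to 0$; so the conclusion of your argument is unaffected, only the stated exponent needs correction.
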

\begin{proof}
Observe first that by \eqref{prop2}, we get:
\begin{equation}\label{bkkkk}
\frac{B_{2n,k+1}(a)}{(2n)!}=\frac{1}{(k+1)!}\underbrace{\sum_{\alpha_1=k}^{2n-1}\sum_{\alpha_2=k-1}^{\alpha_1-1}\cdots \sum_{\alpha_k=1}^{\alpha_{k-1}-1}}_{k}\frac{1}{(2n-\alpha_1)^2(\alpha_1-\alpha_2)^2\cdots (\alpha_{k-1}-\alpha_k)^2\alpha_k^2}.
\end{equation}
We proceed by induction on $k$. For $k=1$ we have:
\begin{equation}
\begin{split}
&\frac{(2n)^2B_{2n,2}(\tilde a)}{(2n)!}=\frac{(2n)^2}{2}\sum_{\alpha_1=1}^{2n-1}\frac{1}{(2n-\alpha_1)^2\alpha_1^2}\\
&=\frac{1}2\left(\frac{(2n)^2}{(2n-1)^2}+\frac{(2n)^2}{(2n-2)^22^2}+\frac{(2n)^2}{(2n-3)^23^2}+\cdots+\frac{(2n)^2}{2^2(2n-2)^2}+\frac{(2n)^2}{(2n-1)^2}\right)\\
&=\frac{(2n)^2}{(2n-1)^2}+\frac{(2n)^2}{(2n-2)^22^2}+\frac{(2n)^2}{(2n-3)^23^2}+\dots+\frac{(2n)^2}{(n-1)^2(n+1)^2}\\
&=\frac{(2n)^2}{(2n-1)^2}\sum_{j=1}^\infty\frac1{j^2}+\psi(n),
\end{split}\nonumber
\end{equation}
where $\psi(n)$ satisfies $\lim_{n\rightarrow \infty}\psi(n)=0$.
Thus we have:
$$
\lim_{n\rightarrow \infty}\frac{(2n)^2B_{2n,2}(\tilde a)}{(2n)!}=\sum_{j=1}^\infty\frac{1}{j^2}.
$$
Assume now that:
\begin{equation}\label{inductive}
\lim_{n\rightarrow \infty}\frac{(2n)^2B_{2n,k}(\tilde a)}{(2n)!}=\frac{k}{k!}\sum_{j_1=1}^\infty\frac{1}{j_1^2}\cdots\sum_{j_{k-1}=1}^\infty\frac{1}{j_{k-1}^2}.
\end{equation}
By \eqref{bkkkk} this implies:
$$
\lim_{n\rightarrow \infty}\underbrace{\sum_{\alpha_1=k-1}^{2n-1}\sum_{\alpha_2=k-2}^{\alpha_1-1}\cdots \sum_{\alpha_{k-1}=1}^{\alpha_{k-2}-1}}_{k-1}\frac{(2n)^2}{(2n-\alpha_1)^2(\alpha_1-\alpha_2)^2\cdots (\alpha_{k-2}-\alpha_{k-1})^2\alpha_{k-1}^2}=k\sum_{j_1=1}^\infty\frac{1}{j_1^2}\cdots\sum_{j_{k-1}=1}^\infty\frac{1}{j_{k-1}^2}
$$
and thus:
\begin{equation}
\begin{split}
\!\!\!\!\!\!\!\!\!\!\!\!\!\!\!\!\!\!\!\!\!\frac{(2n)^2B_{2n,k+1}(\tilde a)}{(2n)!}=&\frac{(2n)^2}{(k+1)!}\underbrace{\sum_{\alpha_1=k}^{2n-1}\sum_{\alpha_2=k-1}^{\alpha_1-1}\cdots \sum_{\alpha_k=1}^{\alpha_{k-1}-1}}_{k}\frac{1}{(2n-\alpha_1)^2(\alpha_1-\alpha_2)^2\cdots (\alpha_{k-1}-\alpha_k)^2\alpha_k^2}\\
=&\frac{(2n)^2}{(k+1)!(2n-k)^2}\underbrace{\sum_{\alpha_2=k-1}^{k-1}\cdots \sum_{\alpha_k=1}^{\alpha_{k-1}-1}}_{k-1}\frac{1}{(k-\alpha_2)^2\cdots (\alpha_{k-1}-\alpha_k)^2\alpha_k^2}+\\
&+\frac{(2n)^2}{(k+1)!(2n-k-1)^2}\underbrace{\sum_{\alpha_2=k-1}^{k}\cdots \sum_{\alpha_k=1}^{\alpha_{k-1}-1}}_{k-1}\frac{1}{(k+1-\alpha_2)^2\cdots (\alpha_{k-1}-\alpha_k)^2\alpha_k^2}+\dots\\
&\dots+\frac{(2n)^2}{(k+1)!n^2}\underbrace{\sum_{\alpha_2=k-1}^{n-1}\cdots \sum_{\alpha_k=1}^{\alpha_{k-1}-1}}_{k-1}\frac{1}{(n-\alpha_2)^2\cdots (\alpha_{k-1}-\alpha_k)^2\alpha_k^2} +\dots\\
&\dots+\frac{1}{(k+1)!2^2}\underbrace{\sum_{\alpha_2=k-1}^{2n-3}\cdots \sum_{\alpha_k=1}^{\alpha_{k-1}-1}}_{k-1}\frac{(2n)^2}{(2n-2-\alpha_2)^2\cdots (\alpha_{k-1}-\alpha_k)^2\alpha_k^2}\\
&\frac{1}{(k+1)!}\underbrace{\sum_{\alpha_2=k-1}^{2n-2}\cdots \sum_{\alpha_k=1}^{\alpha_{k-1}-1}}_{k-1}\frac{(2n)^2}{(2n-1-\alpha_2)^2\cdots (\alpha_{k-1}-\alpha_k)^2\alpha_k^2}\\
=&\frac{(2n)^2}{(k+1)!(2n-k)^2}+\\
&+\frac{(2n)^2}{(k+1)!(2n-k-1)^2}\underbrace{\sum_{\alpha_2=k-1}^{k}\cdots \sum_{\alpha_k=1}^{\alpha_{k-1}-1}}_{k-1}\frac{1}{(k+1-\alpha_2)^2\cdots (\alpha_{k-1}-\alpha_k)^2\alpha_k^2}+\\
&+\frac{(2n)^2}{(k+1)!(2n-k-2)^2}\underbrace{\sum_{\alpha_2=k-1}^{k+1}\cdots \sum_{\alpha_k=1}^{\alpha_{k-1}-1}}_{k-1}\frac{1}{(k+2-\alpha_2)^2\cdots (\alpha_{k-1}-\alpha_k)^2\alpha_k^2}+\\
&+\dots+\Psi(n)+\dots+\frac{k}{(k+1)!}\sum_{j_1=1}^\infty\frac{1}{j_1^2}\cdots\sum_{j_{k-1}=1}^\infty\frac{1}{j_{k}^2}
\end{split}\nonumber
\end{equation}
where $\Psi(n)$ is an infinitesimal function of $n$. 

Then by \eqref{inductive}:
\begin{equation}
\begin{split}
\!\!\!\!\!\!\!\!\!\lim_{n\rightarrow\infty}\frac{(2n)^2B_{2n,k+1}(\tilde a)}{(2n)!}=&\frac{1}{(k+1)!}+\frac{1}{(k+1)!}\underbrace{\sum_{\alpha_2=k-1}^{k}\cdots \sum_{\alpha_k=1}^{\alpha_{k-1}-1}}_{k-1}\frac{1}{(k+1-\alpha_2)^2\cdots (\alpha_{k-1}-\alpha_k)^2\alpha_k^2}+\\
&+\frac{1}{(k+1)!}\underbrace{\sum_{\alpha_2=k-1}^{k+1}\cdots \sum_{\alpha_k=1}^{\alpha_{k-1}-1}}_{k-1}\frac{1}{(k+2-\alpha_2)^2\cdots (\alpha_{k-1}-\alpha_k)^2\alpha_k^2}+\\
&+\dots+\frac{k}{(k+1)!}\sum_{j_1=1}^\infty\frac{1}{j_1^2}\cdots\sum_{j_{k-1}=1}^\infty\frac{1}{j_{k}^2}
\end{split}\nonumber
\end{equation}
i.e.:
\begin{equation}
\begin{split}
\!\!\!\!\!\!\!\!\!\lim_{n\rightarrow\infty}\frac{(2n)^2B_{2n,k+1}(\tilde a)}{(2n)!}=&\frac{1}{(k+1)!}\sum_{j=1}^\infty\underbrace{\sum_{\alpha_2=k-1}^{k-2+j}\cdots \sum_{\alpha_k=1}^{\alpha_{k-1}-1}}_{k-1}\frac{1}{(k-1+j-\alpha_2)^2\cdots (\alpha_{k-1}-\alpha_k)^2\alpha_k^2}+\\
&+\frac{k}{(k+1)!}\sum_{j_1=1}^\infty\frac{1}{j_1^2}\cdots\sum_{j_{k-1}=1}^\infty\frac{1}{j_{k}^2}.
\end{split}\nonumber
\end{equation}
By setting:
$$
A_1(k):=1,\quad A_j(k)=\frac k{j^2}+k\sum_{s=2}^{j-1}\frac{A_{j-s+1}(k-1)}{s^2},
$$
we finally get:
$$
\lim_{n\rightarrow\infty}\frac{(2n)^2B_{2n,k+1}(\tilde a)}{(2n)!}=\frac{1}{(k+1)!}\sum_{j=1}^\infty A_j(k)+\frac{k}{(k+1)!}\sum_{j_1=1}^\infty\frac{1}{j_1^2}\cdots\sum_{j_{k-1}=1}^\infty\frac{1}{j_{k}^2},
$$
and conclusion follows by observing that:
\begin{equation}
\begin{split}
\sum_{j=1}^\infty A_j(k)&=\sum_{j=1}^\infty\frac{k}{j^2}+A_2(k-1)\sum_{j=2}^\infty\frac{k}{j^2}+A_3(k-1)\sum_{j=3}^\infty\frac{k}{j^2}+\dots\\
&=\sum_{j_1=1}^\infty\frac{1}{j_1^2}\cdots\sum_{j_{k-1}=1}^\infty\frac{1}{j_{k}^2}.
\end{split}\nonumber
\end{equation}

\end{proof}

We are now able to prove our main result.
\begin{proof}[Proof of Theorem \ref{mainnew}]

Observe first that if $(M,cg)$ does not admit a K\"ahler immersion into $\CP^\infty$ for any value of $c>0$, then it does not either in any other space form. 
In fact, if $(M,cg)$ admits a K\"ahler immersion into $\ell^2(\mathds{C})$ then by a theorem of Bochner (see \cite{bochner}), it also does into $\CP^\infty$, and in particular since the multiplication by $c$ is harmless when one considers K\"ahler immersion into flat spaces, it does for any value of $c>0$. Further, in a totally similar way as in the proof of Bochner's statement, one can prove that a K\"ahler manifold admitting a K\"ahler immersion into $\CH^\infty$ can also be K\"ahler immersed into $\ell^2(\C)$.
Thus, it is enough to show that $(\mathds{C},cg)$ does not admit a K\"ahler immersion into $\CP^\infty$ for any $c>0$. 

Further, the diastasis function associated to the Cigar metric is globally defined and positive by Lemma \ref{pos} above. 

Then, by Calabi's criterion stated above, it remains only to show that there exists $n$ such that:
$$
\frac{\partial^{2n}\exp\left(cD_0(|z|^2)\right)}{\partial z^n\partial \bar z^n}|_0< 0,
$$
where $D_0(|z|^2)$ is the K\"ahler potential defined in \eqref{expdiast}.
Observe first that setting:
\begin{equation}\label{aj}
\tilde a_j:=-c\frac{j!}{j^2},
\end{equation}
by \eqref{exp} and \eqref{proprY} we get:
\begin{equation}
\begin{split}
\frac{\partial^{2n}\exp\left(cD_0(|z|^2)\right)}{\partial z^n\partial \bar z^n}|_0=&\frac{1}{n!}\frac{d^{n}\exp\left(cD_0(x)\right)}{dx^n}|_0\\
=&\frac{1}{n!}Y_n\left(-\tilde a_1,(-1)^2\tilde a_2,\dots, (-1)^n\tilde a_n\right)\\
=&\frac{(-1)^n}{n!}Y_n\left(\tilde a_1,\dots, \tilde a_n\right).
\end{split}\nonumber
\end{equation}
We wish to prove that for any $c>0$ there exists $n$ big enough such that:
$$
Y_{2n}\left(a_1,\dots, a_{2n}\right)<0.
$$
Observe first that since $\tilde a_j=-c\, a_j$ with $a_j=j!/j^2$, we get:
\begin{equation}
\begin{split}
{Y_{2n}(\tilde a)}=&\sum_{k=1}^{2n}(-1)^{k}c^kB_{2n,k}( a)\\
=&\,\frac{(2n)!c}{(2n)^2}\left(-1+\frac{c(2n)^2B_{2n,2}( a)}{(2n)!}-\frac{c^2(2n)^2B_{2n,3}( a)}{(2n)!}+\dots+\frac{c^{2n-1}(2n)^2}{(2n)!}\right).
\end{split}\nonumber
\end{equation}
Thus, we need to prove that for any value of $c$ there exists $n$ large enough such that the following inequality holds:
\begin{equation}\label{eqtobeproven}
\frac{c(2n)^2B_{2n,2}( a)}{(2n)!}-\frac{c^2(2n)^2B_{2n,3}( a)}{(2n)!}+\dots+\frac{c^{2n-1}(2n)^2}{(2n)!}<1.
\end{equation}
By Lemma \ref{bjklimit},
$$
\lim_{n\rightarrow+\infty}\frac{(2n)^2B_{2n,k+1}( a)}{(2n)!}=\frac{k+1}{(k+1)!}\sum_{j_1=1}^\infty\frac{1}{j_1^2}\sum_{j_2=1}^\infty\frac{1}{j_2^2}\cdots\sum_{j_k=1}^\infty\frac{1}{j_k^2},
$$
and since:
$$
\sum_{j=1}^\infty\frac{1}{j^2}=\frac{\pi^2}{6},
$$
we get:
$$
\lim_{n\rightarrow+\infty}\frac{(2n)^2B_{2n,k+1}( a)}{(2n)!}=\frac{1}{k!}\left(\frac{\pi^2}{6}\right)^k.
$$
Plugging this into \eqref{eqtobeproven}, we get that as $n$ goes to infinity the left hand side converge to:
$$
\sum_{k=1}^\infty\frac{(-1)^{k+1}c^k}{k!}\left(\frac{\pi^2}{6}\right)^k=1-e^{-\frac{c\,\pi^2}{6}},
$$
and conclusion follows by observing that $1-e^{-\frac{c\,\pi^2}{6}}$ is strictly increasing as a function on $c$ and its limit value as $c$ grows is $1$.
\end{proof}

\begin{remark}\rm 
With similar tools one can prove  that  $(\C, cg)$, namely the complex plane with a multiple of the Cigar metric does not admit a K\"ahler immersion into any indefinite flat space of finite signature.
\end{remark}

\end{document}